\theoremstyle{plain}
\newtheorem{mythm}{Theorem}
\newtheorem{myclaim}{Claim}
\title{Additive Non-negative Matrix Factorization for Missing Data}
\author{
Mithun Das Gupta
\\
Epson Research and Development\\
San Jose, CA.\\
\texttt{mdasgupta@erd.epson.com} \\
}
\begin{document}

\maketitle

\begin{abstract}
Non-negative matrix factorization (NMF) has previously been shown to
be a useful decomposition for multivariate data. We interpret the factorization in a new way and use it to generate missing attributes from test data. We provide a joint optimization scheme for the missing attributes as well as the NMF factors. We prove the monotonic convergence of our algorithms. We present classification results for cases with missing attributes.
\end{abstract}

\section{Introduction}
The nonnegative matrix factorization (NMF) has been shown recently to be useful for many applications in environment, pattern recognition, multimedia, text mining, and DNA gene expressions~\cite{Cooper02,Xu03,Paatero94,Li01}. NMF can be traced back to 1970s (Notes from G. Golub) and has been studied extensively by Paatero~\cite{Paatero94}. The work of Lee and Seung~\cite{Lee99,Lee00} brought much attention to NMF in machine learning and data mining fields. Various extensions and variations of NMF have been proposed recently~\cite{Ding06,Ding06a,Ding06b,Berry07,Torre06}. NMF, in its most general form, can be described by the following factorization
\begin{equation}\label{EQN:NMF_2}
   X^{d\times N} = W^{d\times r} H^{r \times N}
\end{equation}
where $d$ is the dimension of the data, $N$ is the number of data points (usually more than $d$) and $r < d$. Generally, this factorization has been compared with data decomposition techniques. In this sense $W$ is called the set of basis functions and the set $H$ is the data specific weights. It has been claimed by numerous researchers that such a decomposition has some favorable properties over other similar decompositions, such as PCA etc.

In the vast amount of literature present in this area, the parameter $r$ largely goes unnoticed. We pose the question, what are the fundamental differences in the decomposition for the three cases $r < d$, $r = d$ and $r > d$. The NMF decomposition for $r < d$ can be imagined to be an energy compaction process and as such, only basis vectors with higher energy remain in the decomposition. For the case of $r = d$, we can think $W$ as some sort of rotation in $d-$dimensions and as such the locally linear attributes of the data are preserved, as can be verified by finding the indices of the nearest neighbors of each data point in $X$ as well as $H$.

Now the remaining question is what happens for $r > d$. It is at this juncture that we want to concentrate our research and draw meaningful conclusions from experimental as well as empirical analysis. 
To develop a superficial motivation we look into the literature of sparse coding~\cite{Olshausen96,Garrigues08}. The basic idea which we borrow, from them is the fact that $r$ need not be limited by the dimensionality of the data. The similarity has been shown to be even greater if an additional sparseness constraint is introduced into the optimization framework~\cite{Hoyer04}. We motivate our analysis from a classification point of view. In the actual application domain we would like to handle missing attributes.

\begin{wrapfigure}{r}{0.5\textwidth}
  \begin{center}
    \includegraphics[width=0.5\textwidth]{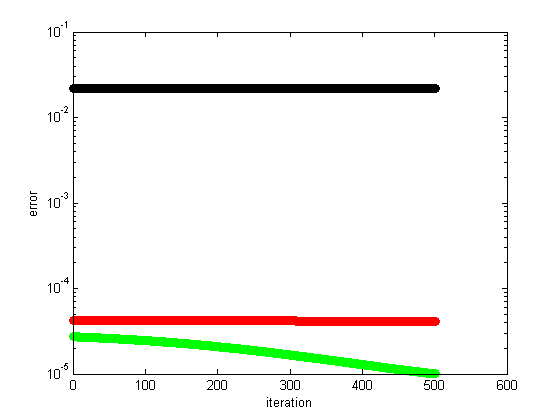}
    \caption{\small{Error norm after 50000 iterations. The plots show only last 500 error norms. Black: first decomposition (traditional NMF), red: second decomposition and green: final decomposition. The last scheme ran for only 3000 iterations for convergence ($err \le 10^{-5}$).}}\label{Fig:succ_err}
  \end{center}
\end{wrapfigure}
\section{Additive NMF}
In this section we introduce the idea of additive NMF (ANMF) which can be motivated by the following scenario. Assume that given a non-negative matrix $X$, we have run NMF algorithm for a long amount of time, but due to the inherent sub-optimal nature of the NMF algorithm, we have only converged to a local optimum. Now, we can look at the residue matrix $R_1=X-WH$, and then perform the decomposition again such that we find $R_1 = W_1H_1$. By coupling the sub-optimality conditions on the original and the second decomposition, we can claim that $\|R_1\| \ge \|R_1 - W_1H_1\|$. This leads us to the generic ANMF formulation
\begin{eqnarray}\label{EQN:SNMF}
    X &=& \sum_{i=1}^k W_iH_i \\
    s.t~~&& W_i, H_i \ge 0 ~~~ \forall i
\end{eqnarray}
This decomposition is inherently equivalent to the standard NMF for $k = 1$. Given such a formulation we can write the update equations in one of the two ways

\subsection{Multi-NMF updates}
This scheme essentially means that we employ NMF updates for each value of $i$ for the residue obtained from all the previous values, namely 1 to $i-1$. The error values for $k=3$ for the scurve data is shown in Fig.~\ref{Fig:succ_err}.

\subsection{ANMF updates}
Proceeding in a way similar to Lee and Seung~\cite{Lee00}, we can write an update scheme for the ANMF scheme. Writing the update equation for $H_j^{n+1}$, we can write
\begin{equation*}
    H_j^{n+1} = H_j^{n} + \eta [W_j^TX - W_j^T(\sum_i W_iH_i)]
\end{equation*}
where we have dropped the index $k$ for simplicity. Substituting
\begin{equation*}
    \eta = \frac{H_j^{n}}{W_j^T(\sum_i W_iH_i)}
\end{equation*}
leads to a simple multiplicative update for $H$, and an analogous scheme for $W$.
\begin{equation}\label{EQN:H_update}
    H_j^{n+1} = H_j^{n}\frac{W_j^TX}{W_j^T(\sum_i W_iH_i)},~~~~~~~~W_j^{n+1} = W_j^{n}\frac{XH_j^T}{(\sum_i W_iH_i)H_j^T}
\end{equation}
\subsubsection{Convergence sketch}
Convergence of the SNMF scheme can be proved in the same manner as done by Lee and Seung~\cite{Lee00}. The auxiliary function $G(h,h^t)$ remains exactly the same for us in form, the only difference being the first order derivative which in our case is
\begin{equation*}
    \nabla F(h_j^t) = -W_j^T(X - \sum_i W_i H_i)
\end{equation*}
The minimizer for the auxiliary function can now be shown to be exactly similar to the update rules mentioned in Eqn.~\ref{EQN:H_update}.

\section{ANMF for Missing Attributes}
The training data is used to learn $W$, with $r > d$. This can be viewed as developing an over-complete dictionary from the data. The hope is that this over-complete dictionary will encode enough information, to guess the values of missing attributes, which can be further used for classification. The similar procedure for $r < d$ has no guarantee to encode extra information, since the matrix $W$ will be rank limited by the dimension $r$ and hence removing a row from $d$ might eliminate a rank dimension. The basic idea is that since NMF results in a decomposition of \textit{feature dependent} (W) and \textit{data dependent} term (H), we can remove the particular row from W for which we do not have information, and still generate a good estimate for the data dependent term H for the data point with missing attributes. A simple multiplication with the whole W then gives the approximation for the missing attributes.

The generic data imputation based classification algorithm is as follows:
\begin{itemize}
  \item Training: Assume labeled training data without missing attributes and find the decomposition
  $X_{tr} \approx W_{tr} H_{tr} = \widehat{X}_{tr}$
  \item Now keeping the same $W_{tr}$ find the decomposition
  $X_{te}M_{te} \approx W_{tr} H_{te}M_{te}$. The mask $M_{te}$ is placed to zero out the rows of $W_{tr}$ corresponding to the missing attributes. Finally, the joint estimate for the missing attributes can be obtained from $W_{tr} H_{te}=\widehat{X}_{te}$.
  \item Learn a classifier for $\widehat{X}_{tr}$. Generate the classification results for $\widehat{X}_{te}$.
\end{itemize}

Some of the advantages of the decomposition is that the training data decomposition can be done offline once, and then the learned set of basis functions $W_{tr}$ can be used for the test data transformation. Also, the classification engine does not need to perform any additional task because we convert the data back to its original dimension.

\section{Algorithm Details}
From here on, for the rest of the development, we work on a single test point $\mathbf{x} \in \mathbb{R}^d$ and present all the analysis based on a single point. The extension to multiple points $\mathbf{X}$ is straight forward. We also assume, WLOG, that the last attribute $x_d$ is the missing attribute, and follow the notations mentioned in Eqn.~\ref{EQN:Notations} for the rest of the development.
\begin{eqnarray}\label{EQN:Notations}
  \mathbf{x} = \left[
                  \begin{array}{c}
                    \mathbf{\bar{x}} \\
                    x_d \\
                  \end{array}
                \right],~~~~~~~~~~~~~~~~  \mathbf{{W}} = \left[
                  \begin{array}{c}
                    \mathbf{\overline{W}} \\
                    \mathbf{W}_d \\
                  \end{array}
                \right]
\end{eqnarray}

The optimization scheme for the observed part, $\mathbf{\bar{x}}$, can now be written as
\begin{equation}\label{EQN:NMF_3}
   \mathbf{\bar{x}} = \mathbf{\overline{W}} \mathbf{h}
\end{equation}
The update equations can be obtained directly from the update rules of Lee and Seung~\cite{Lee00}.
Once the iterations have converged we can find the missing attribute from the projection
$x_d = \mathbf{W}_d \mathbf{h}$.

\begin{mythm} The squared error
\begin{equation*}
    (1/2)(\|\mathbf{x} - \mathbf{W} \mathbf{h}\|^2
\end{equation*}
is non-increasing under the following updates
\begin{eqnarray}\label{Eqn:NewUpdateb}
    x_d^{n+1} &=& {\mathbf{W}_d\mathbf{h}_n},~~~~~~~~~~~~~~~~ \mathbf{h}_{n+1} = \mathbf{h}_n\circ\frac{\mathbf{\overline{W}}^T\mathbf{\bar{x}}}{\mathbf{\overline{W}}^T \mathbf{\overline{W}}\mathbf{h}_n}
\end{eqnarray}
where $\mathbf{x}$ and $\mathbf{W}$ are as defined in Eqn.~\ref{EQN:Notations}.
\end{mythm}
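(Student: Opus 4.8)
The plan is to exploit the block structure: because $\mathbf{x}$ and $\mathbf{W}$ split along the last coordinate, the objective separates,
\begin{equation*}
\tfrac12\|\mathbf{x}-\mathbf{W}\mathbf{h}\|^{2}=\tfrac12\|\mathbf{\bar{x}}-\mathbf{\overline{W}}\mathbf{h}\|^{2}+\tfrac12\,(x_d-\mathbf{W}_d\mathbf{h})^{2},
\end{equation*}
and I would analyse the two updates one after the other. The $x_d$-step is the easy one: for fixed $\mathbf{h}_n$ the right-hand side above, as a function of $x_d$ alone, is a convex one-dimensional quadratic minimised exactly at $x_d=\mathbf{W}_d\mathbf{h}_n$, so the assignment $x_d^{n+1}=\mathbf{W}_d\mathbf{h}_n$ cannot increase the error and in fact drives the second term to zero, leaving the error equal to $\tfrac12\|\mathbf{\bar{x}}-\mathbf{\overline{W}}\mathbf{h}_n\|^{2}$.

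For the $\mathbf{h}$-step I would run the Lee and Seung~\cite{Lee00} auxiliary-function argument, exactly as in the convergence sketch above, on the reduced objective $F(\mathbf{h})=\tfrac12\|\mathbf{\bar{x}}-\mathbf{\overline{W}}\mathbf{h}\|^{2}$ whose gradient is $\nabla F(\mathbf{h})=-\mathbf{\overline{W}}^{T}(\mathbf{\bar{x}}-\mathbf{\overline{W}}\mathbf{h})$. Take
\begin{equation*}
G(\mathbf{h},\mathbf{h}_n)=F(\mathbf{h}_n)+(\mathbf{h}-\mathbf{h}_n)^{T}\nabla F(\mathbf{h}_n)+\tfrac12(\mathbf{h}-\mathbf{h}_n)^{T}K(\mathbf{h}_n)(\mathbf{h}-\mathbf{h}_n),
\end{equation*}
with $K(\mathbf{h}_n)$ the diagonal matrix $K(\mathbf{h}_n)_{aa}=(\mathbf{\overline{W}}^{T}\mathbf{\overline{W}}\mathbf{h}_n)_a/(\mathbf{h}_n)_a$. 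The two facts to verify are the standard ones: $G(\mathbf{h},\mathbf{h})=F(\mathbf{h})$, and $K(\mathbf{h}_n)-\mathbf{\overline{W}}^{T}\mathbf{\overline{W}}$ is positive semidefinite (the Lee--Seung lemma, using only that $\mathbf{\overline{W}}^{T}\mathbf{\overline{W}}$ is symmetric and the stated diagonal rescaling by $\mathbf{h}_n$), which together give $G(\mathbf{h},\mathbf{h}_n)\ge F(\mathbf{h})$. Minimising the quadratic $G(\cdot,\mathbf{h}_n)$ yields $\mathbf{h}_{n+1}=\mathbf{h}_n-K(\mathbf{h}_n)^{-1}\nabla F(\mathbf{h}_n)$, which simplifies to the multiplicative rule for $\mathbf{h}$ in Eqn.~\ref{Eqn:NewUpdateb}; the usual sandwich $F(\mathbf{h}_{n+1})\le G(\mathbf{h}_{n+1},\mathbf{h}_n)\le G(\mathbf{h}_n,\mathbf{h}_n)=F(\mathbf{h}_n)$ shows $\tfrac12\|\mathbf{\bar{x}}-\mathbf{\overline{W}}\mathbf{h}_n\|^{2}$ is non-increasing.

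Putting the two steps together, at any iterate in which $x_d$ is kept consistent with the current $\mathbf{h}$ through $x_d=\mathbf{W}_d\mathbf{h}$ the full error $\tfrac12\|\mathbf{x}-\mathbf{W}\mathbf{h}\|^{2}$ collapses to $\tfrac12\|\mathbf{\bar{x}}-\mathbf{\overline{W}}\mathbf{h}\|^{2}$, and the $\mathbf{h}$-update leaves it non-increasing; this gives the theorem under its natural reading. The step I expect to be the main obstacle is the bookkeeping of the cross term if one insists on a literal per-step comparison: the two updates do not descend on the same function (the $\mathbf{h}$-update only ``sees'' the observed block $\mathbf{\overline{W}}$), so after $\mathbf{h}_n\mapsto\mathbf{h}_{n+1}$ the missing-part term re-appears as $\tfrac12(\mathbf{W}_d(\mathbf{h}_n-\mathbf{h}_{n+1}))^{2}=\tfrac12\,\nabla F(\mathbf{h}_n)^{T}K(\mathbf{h}_n)^{-1}\mathbf{W}_d^{T}\mathbf{W}_d\,K(\mathbf{h}_n)^{-1}\nabla F(\mathbf{h}_n)$, and showing $\tfrac12\|\mathbf{x}^{n+1}-\mathbf{W}\mathbf{h}_{n+1}\|^{2}\le\tfrac12\|\mathbf{x}^{n}-\mathbf{W}\mathbf{h}_n\|^{2}$ then requires this quantity to be dominated by the guaranteed decrease $\tfrac12\,\nabla F(\mathbf{h}_n)^{T}K(\mathbf{h}_n)^{-1}\nabla F(\mathbf{h}_n)$ of the observed part. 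I would close this either by reading the squared error with $x_d$ always synchronised to the current iterate (cross term identically zero, so the claim is an immediate corollary of the reduced Lee--Seung guarantee) or, for the literal per-step statement, by establishing the sharper bound $K(\mathbf{h}_n)\succeq\mathbf{W}_d^{T}\mathbf{W}_d$ along the iterates.
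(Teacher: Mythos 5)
Your proposal is correct and follows essentially the same route as the paper: split the objective along the observed and missing coordinates, note that the $x_d$-assignment is the exact minimiser of its (separable) quadratic term, and then invoke the Lee--Seung auxiliary-function argument on the reduced problem $\min_{\mathbf{h}}\tfrac12\|\mathbf{\bar{x}}-\mathbf{\overline{W}}\mathbf{h}\|^{2}$. The one place you go beyond the paper is your closing remark about the cross term: the paper's proof silently performs your ``synchronised'' reading by substituting the stationarity condition $\nabla_{x_d}F=0$ into $\nabla_{\mathbf{h}}F$ before deriving the $\mathbf{h}$-update, so the missing-part residual is taken to vanish at the point where the error is evaluated and the literal per-step domination you flag is never addressed; your observation that the stated update order leaves $\tfrac12(\mathbf{W}_d(\mathbf{h}_n-\mathbf{h}_{n+1}))^2$ unaccounted for is a legitimate gap in the paper's argument, not in yours, and your synchronised-$x_d$ resolution is the reading under which the theorem actually holds.
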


\begin{proof}
The squared error can be written as
\begin{equation}\label{Eqn:opt_scheme1}
    \min_{x_d,\mathbf{h}}~~~ F(x_d,\mathbf{h}) = (1/2)(\|x_d - \mathbf{W}_d \mathbf{h}\|^2 +
   \|\mathbf{\bar{x}} - \mathbf{\overline{W}} \mathbf{h}\|^2)
\end{equation}
Writing the first order derivatives with respect to $x_d$ and $\mathbf{h}$ and equating them to zero we get
\begin{eqnarray}
 \label{Eqn:dfx}
  \nabla_{x_d}F(x_d,\mathbf{h}) &=& (x_d - \mathbf{W}_d \mathbf{h}) = 0\\
 \label{Eqn:dfh}
  \nabla_{\mathbf{h}}F(x_d,\mathbf{h}) &=& -\mathbf{W}_d^T(x_d - \mathbf{W}_d \mathbf{h}) - \mathbf{\overline{W}}^T\lambda (\mathbf{\bar{x}} - \mathbf{\overline{W}} \mathbf{h})  \\  \nonumber
  &=& -\mathbf{W}_d^T\nabla_{x_d}F(x_d,\mathbf{h}) -\mathbf{\overline{W}}^T (\mathbf{\bar{x}}-\mathbf{\overline{W}}\mathbf{h}) \\ \nonumber
  &=& -\mathbf{\overline{W}}^T (\mathbf{\bar{x}}-\mathbf{\overline{W}}\mathbf{h})
\end{eqnarray}
The update for $x_d$ is simply obtained from Eqn.~\ref{Eqn:dfx}. Eqn.~\ref{Eqn:dfh} suggests that the update for $\mathbf{h}$ can now be obtained by solving the reduced system
\begin{equation}\label{Eqn:opt_scheme3}
    \min_{\mathbf{h}}~~~ F(\mathbf{h}) = (1/2)(\|\mathbf{\bar{x}} - \mathbf{\overline{W}} \mathbf{h}\|^2)
\end{equation} which is the same as Eqn.~\ref{EQN:NMF_3}, for which the optimum non-negative, non-increasing update has been shown to be the same as Eqn.~\ref{Eqn:NewUpdateb}~\cite{Lee00}.
\end{proof}
A similar extension can now be applied to the SNMF scheme, which leads us to the following claim:
\begin{myclaim}
The squared error
\begin{equation*}
    (1/2)(\|\mathbf{x} - \sum_i^k \mathbf{W}_i \mathbf{h}_i\|^2
\end{equation*}
is non-increasing under the following updates
\begin{eqnarray}
    x_d^{n+1} &=& \sum_i^k{\mathbf{W}_d^i\mathbf{h}_i^n},~~~~~~~~~~~~~~~~~~~~\mathbf{h}_i^{n+1} = \mathbf{h}_j^n\circ\frac{\mathbf{\overline{W}}_j^T\mathbf{\bar{x}}}{\mathbf{\overline{W}}_j^T \sum_i^k \mathbf{\overline{W}}_i\mathbf{h}_i^n}
\end{eqnarray}
where $\mathbf{x}$ is as defined in Eqn.~\ref{EQN:Notations} and $\mathbf{W}_i$'s are defined analogous to $\mathbf{W}$.
\end{myclaim}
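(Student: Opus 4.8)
The plan is to reduce the Claim to the Theorem by the same two‑step decoupling, after noting that the additive model is an ordinary NMF model over a concatenated dictionary. I would first write the squared error exactly as in Eqn.~\ref{Eqn:opt_scheme1}, with the single product replaced by the sum:
\[
F\big(x_d,\{\mathbf{h}_i\}\big)=\tfrac12\Big(\big\|x_d-\sum\nolimits_{i=1}^{k}\mathbf{W}_d^i\mathbf{h}_i\big\|^2+\big\|\mathbf{\bar{x}}-\sum\nolimits_{i=1}^{k}\mathbf{\overline{W}}_i\mathbf{h}_i\big\|^2\Big).
\]
Setting $\nabla_{x_d}F=0$ yields $x_d=\sum_i\mathbf{W}_d^i\mathbf{h}_i$, the first update of the Claim. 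Exactly as in Eqn.~\ref{Eqn:dfh}, the gradient with respect to a block $\mathbf{h}_j$ is $-\mathbf{W}_d^{jT}\nabla_{x_d}F-\mathbf{\overline{W}}_j^T\big(\mathbf{\bar{x}}-\sum_i\mathbf{\overline{W}}_i\mathbf{h}_i\big)$, so once the $x_d$ update has been applied the first term drops out and the $\{\mathbf{h}_i\}$ subproblem reduces to
\[
\min_{\mathbf{h}_1,\dots,\mathbf{h}_k\ge 0}\ \tfrac12\big\|\mathbf{\bar{x}}-\sum\nolimits_{i=1}^{k}\mathbf{\overline{W}}_i\mathbf{h}_i\big\|^2 ,
\]
i.e.\ the ANMF problem of Eqn.~\ref{EQN:SNMF} restricted to the observed rows.

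The second step is to solve this reduced ANMF problem by collapsing it to an ordinary NMF problem: stacking $\mathbf{h}=[\mathbf{h}_1;\dots;\mathbf{h}_k]$ and concatenating $\mathbf{\overline{W}}_{1:k}=[\mathbf{\overline{W}}_1\mid\cdots\mid\mathbf{\overline{W}}_k]$ gives $\sum_i\mathbf{\overline{W}}_i\mathbf{h}_i=\mathbf{\overline{W}}_{1:k}\mathbf{h}$, so the subproblem is literally Eqn.~\ref{EQN:NMF_3} with $(\mathbf{\overline{W}},\mathbf{h})$ replaced by $(\mathbf{\overline{W}}_{1:k},\mathbf{h})$. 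Its Lee--Seung $H$-update,
\[
\mathbf{h}^{n+1}=\mathbf{h}^{n}\circ\frac{\mathbf{\overline{W}}_{1:k}^T\mathbf{\bar{x}}}{\mathbf{\overline{W}}_{1:k}^T\mathbf{\overline{W}}_{1:k}\mathbf{h}^{n}},
\]
has $j$-th block $\mathbf{h}_j^{n+1}=\mathbf{h}_j^{n}\circ(\mathbf{\overline{W}}_j^T\mathbf{\bar{x}})/(\mathbf{\overline{W}}_j^T\sum_i\mathbf{\overline{W}}_i\mathbf{h}_i^{n})$, which is precisely the second update of the Claim. Hence, by Lee and Seung~\cite{Lee00} --- equivalently the auxiliary-function argument already sketched for the ANMF updates, now applied to the concatenated dictionary $\mathbf{\overline{W}}_{1:k}$ --- the reduced objective $\tfrac12\|\mathbf{\bar{x}}-\sum_i\mathbf{\overline{W}}_i\mathbf{h}_i\|^2$ is non-increasing; and since the $x_d$ update is the exact minimizer over $x_d$ of the first term of $F$ for the current $\mathbf{h}_i$'s (and does not touch the second), the error does not increase under the combined step. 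This is the structure of the Theorem's proof with sums in place of products.

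The step I expect to require the most care is the bookkeeping around $x_d$, which is already implicit in the Theorem: the $\mathbf{h}_i$ update does not involve $x_d$ at all, so after the blocks are updated the first term of $F$ --- which the $x_d$ step had zeroed --- is generically nonzero again, and strict monotonicity of $F$ exactly as written need not hold. The clean formulation is in terms of the profiled objective $\widetilde F(\mathbf{h}_1,\dots,\mathbf{h}_k)=\min_{x_d}F=\tfrac12\|\mathbf{\bar{x}}-\sum_i\mathbf{\overline{W}}_i\mathbf{h}_i\|^2$: $\widetilde F$ provably decreases along the $\mathbf{h}_i$ iterates, and it equals $F$ at every point where $x_d$ carries its $F$-optimal value $\sum_i\mathbf{W}_d^i\mathbf{h}_i$ for the same $\mathbf{h}_i$'s, which is exactly what the Claim's $x_d$ update supplies (using the $\mathbf{h}_i$'s available at that instant). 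A minor secondary check is that the concatenation does not spoil the hypotheses of the Lee--Seung result --- nonnegativity of $\mathbf{\overline{W}}_{1:k}$ and $\mathbf{h}$ is inherited, and the denominators stay positive as long as no $\mathbf{\overline{W}}_i$ has an all-zero column --- after which the auxiliary function of the convergence sketch carries over verbatim with $\mathbf{\overline{W}}^T\mathbf{\overline{W}}$ replaced by $\mathbf{\overline{W}}_{1:k}^T\mathbf{\overline{W}}_{1:k}$, and the remaining positive-semidefiniteness estimate is the routine one from~\cite{Lee00}.
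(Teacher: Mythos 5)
The paper never actually proves this Claim: it is stated immediately after the Theorem with only the remark that ``a similar extension can now be applied,'' so the only available template is the Theorem's proof, and your proposal follows that template faithfully --- eliminate $x_d$ via its stationarity condition, observe that the $\mathbf{h}$-gradient then loses its $x_d$ term, reduce to the observed-rows subproblem, and invoke Lee--Seung. What you add, and what the additive case genuinely needs, is the concatenation observation $\sum_i \mathbf{\overline{W}}_i\mathbf{h}_i=\mathbf{\overline{W}}_{1:k}\mathbf{h}$, which turns the multi-block update into the ordinary Lee--Seung update on a wider nonnegative matrix so that monotonicity of the reduced objective comes for free; this is cleaner than the paper's own ``convergence sketch'' for the additive updates, which argues one block at a time with the others held fixed and, read strictly, only justifies sequential block updates, whereas the update as written in the Claim changes all blocks simultaneously with the common denominator $\mathbf{\overline{W}}_j^T\sum_i\mathbf{\overline{W}}_i\mathbf{h}_i^n$ --- exactly the case your argument covers. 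Your closing remark about the profiled objective $\widetilde F=\min_{x_d}F$ is a real correction rather than pedantry: since $x_d^{n+1}$ is computed from $\mathbf{h}^n$ while the blocks then move to $\mathbf{h}^{n+1}$, the first term of $F$ is generically nonzero again after the combined step, so monotonicity of $F$ as literally stated does not follow from the update order given; the paper's Theorem has the same gap, and your formulation is the correct statement of what is actually proved. Two small housekeeping points you handle implicitly: the Claim's update has an index mismatch ($\mathbf{h}_i^{n+1}$ on the left, $j$ throughout the right-hand side) that your blockwise derivation silently repairs, and the positivity of the denominators does require, as you note, that no column of the concatenated dictionary be identically zero.
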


\section{Experiments}
First we present results on manifold data as shown in Fig.~\ref{Fig:prel_res}. 
As can be seen from the results, similar color dots, which have one axis value artificially set to zero are pulled closer to same color data points on the true manifold.
\begin{figure*}[htbp!]
\begin{center}
\includegraphics[width=6cm,height=3.0cm]{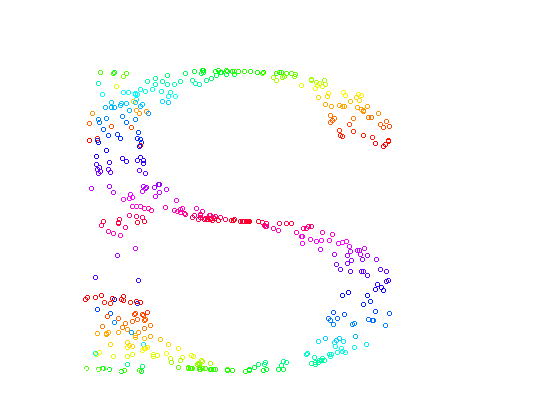}
\includegraphics[width=6cm,height=3.0cm]{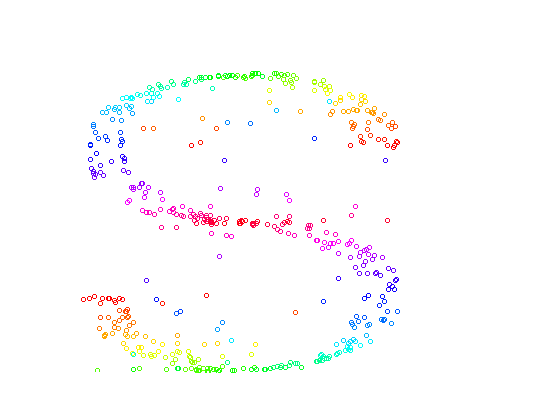}
\includegraphics[width=6cm,height=3.0cm]{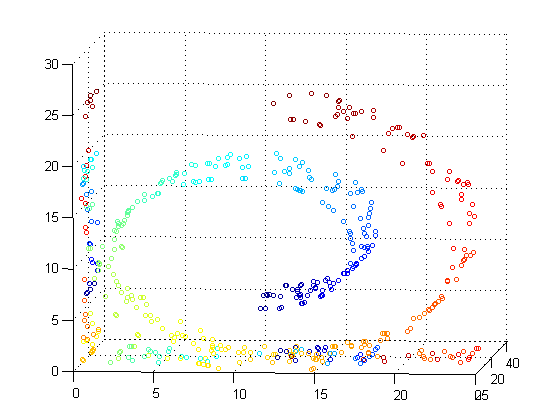}
\includegraphics[width=6cm,height=3.0cm]{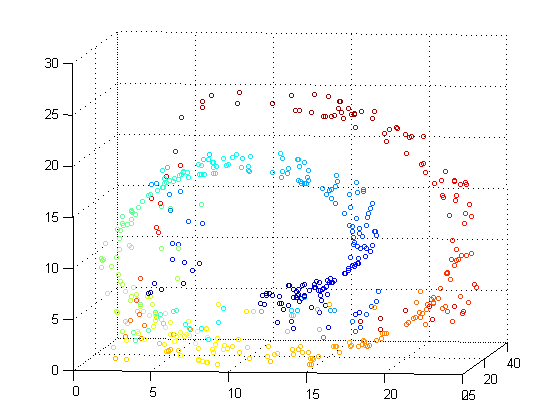}
\caption{Left: input data with missing attributes in one dimension only, right: our result.}\label{Fig:prel_res}
\end{center}
\end{figure*}

Next we present results for the WDBC1 data from UCI Machine learning repository. The data is represented as 30 dimensional vectors, with 2 possible classes. There are total 569 data points. We randomly select about 80\% of the data ar training data and the rest as testing data.

The baseline performance denotes the classification accuracy with complete data. 
We introduce the missing attributes in the following way: for each test data point we generate a 30 dimensional random vector $R \in (0,1)^{30}$. All the indices in the vector $R$ having values less than a threshold $t=0.3$ are marked for deletion. All marked indices are subsequently replaced by zeros in the test data point. This process is repeated for the entire test data set.

The comparison is shown in the following table
\begin{center}
\begin{tabular}{|c|c|c|c|}
\hline
  Dataset & Baseline & Missing 30\% (Zero substitution) & NMF with missing \\
  \hline
  \hline
  WDBC & 97 & 86.95 & 91.91 \\
  Ion & 85.91 & 73.23 & 76.05 \\
  Pima & 76.67 & 69.48 & 70.12 \\
  Echo & 88.89 & 77.78 & 88.89 \\
  \hline
\end{tabular}
\end{center}

In the next experiment we guess the value of the missing attributes, in one of the following manner: zero substitute, mean substitute, and random substitute. The results are shown in the following table. All the results are for the WDBC dataset (base accuracy $93.07\%$).
\begin{center}
\begin{tabular}{|c|c|c|c|c|}
\hline
  \% missing & Zero & Mean & Random & NMF \\
  \hline
  \hline
  10 & 92.17 & 91.30 & 92.17 & 95.17 \\
  20 & 89.57 & 85.22 & 89.57 & 93.30 \\
  30 & 81.74 & 68.70 & 81.74 & 91.30 \\
  40 & 80.35 & 64.16 & 80.35 & 87.61 \\
  \hline
\end{tabular}
\end{center}

{\small{
\bibliographystyle{plain}
\bibliography{nmf_md}
}
}

\end{document}